\newtheorem{theorem}{Theorem}[section]
\newtheorem{definition}[theorem]{Definition}
\newtheorem{corollary}[theorem]{Corollary}
\newtheorem{proposition}[theorem]{Proposition}
\newtheorem{remark}[theorem]{Remark}
\newtheorem{example}[theorem]{Example}
\newcommand{\ignore}[1]{}
\renewcommand{\epsilon}{\varepsilon}
\newcommand{\Section}[1]{\section{#1} \setcounter{equation}{0}}
\title{Self-Similar Graphs}
\author{Kiran B. Chilakamarri\\Department of Mathematics\\
Texas Southern University\\ Houston, TX 77004 \and
M. F. Khan\\ Department of Computer Science\\Texas Southern University\\ Houston, TX 77004 \and
C. E. Larson\\Department of Mathematics and Applied Mathematics\\
Virginia Commonwealth University\\ Richmond, VA 23284\and
C. J. Tymczak\\Department of Physics\\Texas Southern University\\Houston, TX 77004}
\begin{document}

\maketitle

\begin{abstract}
For any graph $G$ on $n$ vertices and for any  {\em symmetric} subgraph $J$ of $K_{n,n}$, we construct an infinite sequence of graphs based on the pair $(G,J)$.  The First graph in the sequence is $G$, then at each stage replacing every vertex of the previous graph by a copy of $G$ and every edge of the previous graph by a copy of $J$ the new graph is constructed. We call these graphs {\em self-similar} graphs. 
We are interested in delineating those pairs $(G,J)$ for which the chromatic numbers of the graphs in the sequence are bounded. Here we have some partial results.  When $G$ is a complete graph and $J$ is a special matching we show that every graph in the resulting sequence is an {\em expander} graph.

Keywords: chromatic numbers, expander graphs
\end{abstract}

\Section{Introduction}
\label{S:introduction}
For general graph theoretic concepts we refer the reader to any standard book on graph theory ([1],[2]). The concept of self-similarity is may be described as follows: we may say an object is self-similar if it can be broken up into pieces and all pieces appear to be same (in some sense) as the whole object except for the scale.  We say a graph  $G$ is {\em self-similar} if there is a partition of vertex set $V$ into $k$ disjoint sets $V_1,V_2,\cdots,V_k$ so that  $G[V_1] \cong G[V_2] \cong \cdots \cong G[V_k] \cong H$
where $G[U]$  denotes the sub-graph on vertex set $U$ and $H$ is the graph obtained by contracting each $V_i$  to a vertex and deleting multiple edges and the symbol $\cong$  indicates the graph isomorphism.  
In this paper we construct a large family of self-similar graphs that have a prescribed chromatic number and some classes of these graphs are shown to be expander graphs.  In Section 2 we will provide a rigorous definition of self-similar graph. In what follows we give a general description.  Let $G$  be any graph on $n$  vertices.  
Letting $G^1=G$ , we construct $G^2$  by replacing every vertex of $G^1$  by a copy of $G$ and if two vertices $x$ and $y$ are adjacent then we attach a bundle of edges $J$  (a sub-graph of the complete bi-partite graph $K_{nn}$ ) between the two copies of $G$  corresponding to vertices $x$ and $y$ and if there is no edge between $x$ and $y$ then no edges are added between the corresponding copies of $G$  .  
Since we are dealing with undirected graphs, we want the bundle of edges joining two copies of $G$ corresponding to the adjacent vertices to be a symmetric bundle. We can repeat this construction to obtain $G^3$  by replacing each vertex of $G^2$  by a copy of $G$  and placing the same edge bundle $J$ between two copies corresponding to adjacent vertices in$G^2$ .  

Repeating this process we can construct an infinite sequence of graphs $G^1,G^2,\cdots$.  Similar constructions have appeared in Physics, see [3] for instance.   We are interested in those pairs $(G,J)$  for which the infinite sequence of graphs $G^1,G^2,\cdots$   have a bounded chromatic number.   For instance, if $G$ is the complete graph on n vertices and $J$ is the complete bi-partite graph $K_{nn}$, then $G^i=K_{n^i}$, a complete graph on $n^i$ vertices ($\chi (G^i)=n^i$)   and if $G$   is any graph and $J$ has no edges, then $G^i$ is simply $n^i$ many disconnected copies of $G$   and all graphs $G^i$ in the sequence have same chromatic number namely $\chi(G)$. 
 In Section 2, we give rigorous definitions, some examples and basic properties. In Section 3, we study two special cases of $J$ for which the infinite sequence of graphs have constant chromatic number (Theorem 3.1 and Theorem 3.2). 
  In the same section we delineate those edge bundles for which chromatic numbers increase unboundedly for any non-trivial graph (Theorem 3.4) and we also characterize those edge bundles for which the chromatic numbers of $G^1,G^2,\cdots$  is bounded when $G$ is a complete graph.  In Section 4, we give a complete analysis for small complete graphs on two and three vertices.  In Section 5, we will produce an infinite sequence of edge expander graphs with fixed expansion coefficient.  In Section 6, we explore some potential applications and list some unsolved problems.

\Section{Definitions and Examples}
Throughout the rest of this paper we will write $x \sim y$ to indicate adjacency between two vertices $x$ and $y$  in a graph. Let $G$  be a simple graph with vertex set $V(G)=\{v_1,v_2,\cdots,v_n\}$.  Let $K_{nn}$ be the complete bipartite graph with bipartition $I_n$  and $ I'_n$, where $I_n=\{1,2,\cdots,n\}$  and $I'_n=\{1',2',\cdots,n'\}$ .  Let $J$ be a subgraph of $K_{nn}$  with $V(J)=I_n \cup I'_n$ , whose edges satisfy the symmetry condition $i \sim j'$  if and only if $j \sim i'$ , and we will refer to $J$  as a symmetric subgraph of $K_{nn}$ or symmetric edge bundle.   Since we attach the edge bundle between two copies of $G$, we may also write $v_i \sim v'_j $  instead of $i \sim j'$  .  Next we define recursively the infinite sequence of self-similar graphs for any pair $(G,J)$.

\begin{definition}\label{L:example}
Given a simple graph $G$  on $n$ vertices and $J$ a symmetric subgraph of $K_{nn}$,
 let $G^1=G$, $G^k$ is constructed from  $G^{k-1}$ as follows:
 \newline
$V(G^k)= \{(v_{i_1},v_{i_2},\cdots,v_{i_k}):v_{i_j}  \in V(G) \}$, and the edges are defined in terms of adjacencies in $G^{k-1}$  and adjacencies in $J$,  
 \newline
 $(v_{i_1},v_{i_2},\cdots,v_{i_k}) \sim (v_{j_1},v_{j_2},\cdots,v_{j_k}) \in G^k$ if and only if either 
 \newline
 (1)  $(v_{i_1},v_{i_2},\cdots,v_{i_{k-1}})=(v_{j_1},v_{j_2},\cdots,v_{j_{k-1}})$ and $v_{i_k} \sim v_{j_k}$ in $G$ , or
  \newline
(2) $(v_{i_1},v_{i_2},\cdots,v_{i_{k-1}}) \sim (v_{j_1},v_{j_2},\cdots,v_{j_{k-1}})$ in $G^{k-1}$   and  $i_k \sim j'_k$ in $J$.
  
\end{definition}

The adjacency condition (1) says we replace every vertex in $G^{k-1}$  with a copy of $G$   and the condition (2) says if two vertices are adjacent in $G^{k-1}$ then attach an edge bundle   between the corresponding copies of $G$.
These graphs become very large very quickly, but we can visualize these graphs for some smaller examples.

\begin{example}\label{L:example}
Suppose $G$  is a single edge, i.e., $G=K_2$, and edges of $J$  are $\{(1,1' ), (2,2')\}$ then $G^k$ is the hypercube $Q_k$.
\end{example} 

\begin{example}\label{L:example}
Suppose $G$  is a single edge, i.e., $G=K_2$, and edges of $J$  are $\{(1,2' ), (2,1')\}$ then $G^k$ is the hypercube $Q_k$.
\end{example} 

\begin{example}\label{L:example}
Suppose $G$  is a single edge, i.e., $G=K_2$ , and edges of $J$  are\\ $\{(1,1'),(1,2'),(2,1'),(2,2')\}$, then $G^k$ is the complete graph on $2^k$  vertices, i.e., $K_{2^k}$.
\end{example}

Next we will count the number of edges of $G^k$ and find  formula for the degree of vertices in $G^k$ in terms of the number of edges and degrees in $G$  and $J$.  Let $e(H)$  denote the number of edges in a graph $H$  and $d_H(x)$ denote the degree of vertex $x$ in $H$. We will simply write $d(x)$  for degree if the context is clear. 

\begin{proposition}
Let G be any simple graph on $n$  vertices and let $J$ be a symmetric sub-graph of $K_{nn}$ and $e_J=e(J)$, then $ e(G^k)=e(G) \lceil \frac{{n^k}-{e_J}^k}{n-e_J} \rceil $, if $e_{J} \neq n$ and $e(G^k)=kn^{k-1}e(G)$ if $e_J=n$.
\end{proposition}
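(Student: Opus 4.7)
The plan is to count edges by induction on $k$, classifying the edges of $G^k$ according to the two clauses of Definition 2.1. Clause (1) attaches, to each vertex of $G^{k-1}$, a private copy of $G$; since $G^{k-1}$ has $n^{k-1}$ vertices, this contributes exactly $n^{k-1}\,e(G)$ edges. Clause (2) attaches, to each edge of $G^{k-1}$, a copy of the bundle $J$; this contributes exactly $e_J \cdot e(G^{k-1})$ edges. The two classes are disjoint (the first consists of edges both of whose endpoints share the first $k-1$ coordinates, the second of edges whose first $k-1$ coordinates differ), so
\[
e(G^k) \;=\; n^{k-1}\,e(G)\;+\;e_J\cdot e(G^{k-1}),
\]
with initial value $e(G^1)=e(G)$.

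The remaining work is to solve this first-order linear recurrence with the non-constant forcing term $n^{k-1}e(G)$. I would handle it by guess-and-check induction on the stated closed form, splitting into the two cases flagged by the statement. In the generic case $e_J\neq n$, I expect to verify by straightforward algebra that
\[
a_k \;=\; e(G)\,\frac{n^k-e_J^k}{n-e_J}
\]
satisfies both the recurrence and the initial condition; substituting into the right-hand side produces a numerator $e_J(n^{k-1}-e_J^{k-1})+n^{k-1}(n-e_J)=n^k-e_J^k$, which gives the claim. Note that this quantity equals $n^{k-1}+n^{k-2}e_J+\cdots+e_J^{k-1}$, so it is automatically an integer; the ceiling in the statement is therefore cosmetic.

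In the degenerate case $e_J=n$, the recurrence becomes $a_k=n\,a_{k-1}+n^{k-1}e(G)$. I would divide through by $n^{k-1}$ to obtain $b_k=b_{k-1}+e(G)$ for $b_k:=a_k/n^{k-1}$, with $b_1=e(G)$, hence $b_k=k\,e(G)$ and $a_k=kn^{k-1}e(G)$, matching the stated formula.

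The main (and really only) obstacle is being careful about the edge partition in the inductive step: one must check that clauses (1) and (2) of Definition 2.1 are mutually exclusive and jointly exhaustive, and that a copy of $J$ placed between two copies of $G$ contributes exactly $e_J$ new edges (this uses the symmetry condition on $J$, which ensures the bundle is well-defined regardless of which endpoint of the underlying edge of $G^{k-1}$ is listed first). Once this accounting is in place, everything else is routine solution of a linear recurrence.
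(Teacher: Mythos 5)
Your proof is correct and follows essentially the same route as the paper: both derive the recurrence $e(G^k)=n^{k-1}e(G)+e_J\,e(G^{k-1})$ from the two adjacency clauses and then obtain the closed form (the paper by unrolling the recurrence into the geometric sum, you by verifying the closed form inductively — a cosmetic difference). Your treatment is in fact slightly more complete, since you also work out the $e_J=n$ case and observe that the ceiling in the statement is unnecessary, points the paper's written proof leaves implicit.
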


\begin{proof}
Remembering that $G^k$ is constructed from $G^{k-1}$  by replacing each of it�s vertices by a copy of $G$  and attaching an edge bundle between the copies of $G$ corresponding to edges in $G^{k-1}$, we can write $e(G^k)= v(G^{k-1})e(G)+e(G^{k-1})e_J$ .  Since $v(G^{k-1})=n^{k-1}$,

\begin{align}
e(G^k)  &= e(G)n^{k-1}+e_j e(G^{k-1}  \nonumber )\\
  &= e(G)[n^{k-1} +n^{k-2} e_J] + {e_J}^2e(G^{k-2}) \nonumber \\
  \vdots   \nonumber  \\
  &= e(G)[n^{k-1} +n^{k-2} e_J + \cdots + n {e_J}^{k-2} + {e_J}^{k-1}]   \nonumber \\
  &= e(G) \lceil  \frac{n^k - {e_J}^k}{n- e_J}   \rceil  \text{ if } e_J \neq n    
\end{align}

\end{proof}
\begin{proposition}
Let $G$ be any simple graph on $n$  vertices and let $J$ be a symmetric sub-graph of $K_{nn}$ then, $d(u_1,u_2,\cdots,u_k)= d_G(u_k) + \sum_{i=1}^{k-1} d_G(u_{k-i}) \prod_{l=0}^{i-1} d_J(u_{k-l})$where $d(u_1,u_2,\cdots,u_k)$  is the degree of the vertex $(u_1,u_2,\cdots,u_k)$  in $G^k$.

\end{proposition}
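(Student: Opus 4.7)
The plan is induction on $k$, driven directly by the two clauses of Definition~2.1. The base case $k=1$ is immediate, since in $G^1=G$ the sum in the claim is empty and the formula reduces to $d_G(u_1)$.

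For the inductive step, fix a vertex $w=(u_1,\ldots,u_k)$ of $G^k$ and partition its neighbors according to which clause of Definition~2.1 produces the edge. Clause~(1) contributes exactly the neighbors $(u_1,\ldots,u_{k-1},v_k)$ with $v_k\sim u_k$ in $G$, giving $d_G(u_k)$ edges. Clause~(2) contributes the neighbors $(v_1,\ldots,v_{k-1},v_k)$ in which $(v_1,\ldots,v_{k-1})$ is any neighbor of $(u_1,\ldots,u_{k-1})$ in $G^{k-1}$ and $v_k$ is any index with $u_k\sim v_k'$ in $J$. These two choices are independent, and the symmetry of $J$ makes $d_J(u_k)$ unambiguous as the number of such $v_k$, so clause~(2) contributes $d_J(u_k)\,d_{G^{k-1}}(u_1,\ldots,u_{k-1})$ edges. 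The two families are disjoint, because clause~(1) fixes the first $k-1$ coordinates while clause~(2) necessarily changes them (loops being excluded in $G^{k-1}$). Hence we obtain the recurrence
\[
d_{G^k}(u_1,\ldots,u_k) \;=\; d_G(u_k) \;+\; d_J(u_k)\,d_{G^{k-1}}(u_1,\ldots,u_{k-1}).
\]

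To finish, I would substitute the inductive hypothesis into the recurrence and reindex. The leading term $d_G(u_{k-1})$ of the hypothesis, multiplied by $d_J(u_k)$, supplies the $i=1$ summand of the claimed formula, and the general summand $d_G(u_{(k-1)-i})\prod_{l=0}^{i-1}d_J(u_{(k-1)-l})$, after multiplication by $d_J(u_k)$, becomes $d_G(u_{k-(i+1)})\prod_{l=0}^{i}d_J(u_{k-l})$, which is the $(i+1)$-st summand in the claim. Together with the stand-alone $d_G(u_k)$, these account for every term $i=1,\ldots,k-1$. The only step that takes any care is this reindexing; the main conceptual content is the recurrence itself, which is an immediate bookkeeping consequence of Definition~2.1, so no significant obstacle is expected.
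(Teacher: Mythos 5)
Your proof is correct and follows essentially the same route as the paper: both split the neighbors of $(u_1,\ldots,u_k)$ according to the two clauses of Definition~2.1 to obtain the recurrence $d_{G^k}(u_1,\ldots,u_k)=d_G(u_k)+d_J(u_k)\,d_{G^{k-1}}(u_1,\ldots,u_{k-1})$ and then iterate it to get the closed form. Your write-up is slightly more careful than the paper's (disjointness of the two neighbor families and the explicit reindexing), but the argument is the same.
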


\begin{proof}
The vertex $(u_1,u_2,\cdots,u_k)$ is adjacent to $(v_1,v_2,\cdots,v_k)$  if either\\
 (i) $ u_1=v_1, u_2=v_2, \cdots,u_{k-1}=v_{k-1}$  and $u_k \sim v_k $ in $G$  contributing $d_G(u_k)$ to  $d(u_1,u_2,\cdots,u_k)$ \\ 
 or
 \\
(ii)    $(u_1,u_2,\cdots,u_{k-1}) \sim (v_1,v_2,\cdots,v_{k-1})$ in $G^{k-1}$  and $k \sim k'$   in  $J$ contributing the product of  $d_{G^{k-1}}(u_1,u_2,\cdots,u_{k-1})$ and $ d_J (u_k)$  to $d(u_1,u_2,\cdots,u_k)$  .  Thus we have,
 \begin{equation}
 d(u_1,u_2,\cdots,u_k)=d_G(u_k)+d_{G^{k-1}}(u_1,u_2,\cdots,u_{k-1})d_J(u_k)
 \end{equation}   
and the formula for $d(u_1,u_2,\cdots,u_k)$   follows by repeated application of this recursion relation.

\end{proof}

The general formula for $d(u_1,u_2,\cdots,u_k)$ is not in closed form, but in some special cases we have closed formulas:\\
(A) If both  $G$ and $J$   are regular graphs with $d_G(u) \equiv d$ and $d_J(v) \equiv c \geq 2 $, then 
\begin{equation}
d_{G^k}(u_1,u_2,\cdots,u_k)=d[\frac{c^k-1}{c-1}].
\end{equation}
 \\
(B) If  $G$ is any graph and $J$ is a matching, i.e., $d_J(v) \equiv 1$, then

\begin{equation}
d_{G^k}(u_1,u_2,\cdots,u_k)=d_G(u_1)+d_G(u_2) + \cdots + d_G(u_k).
\end{equation} 
\\
(C) If $G$  is any regular graph of degree $d$  and $J$ is a matching, then
\begin{equation}
d_{G^k}(u_1,u_2,\cdots,u_k)=kd.
\end{equation}

\section{Chromatic Numbers of graphs  $G^k$}

In this section we will calculate $\chi (G^k)$ for some pairs  $(G,J)$ where $G$  is arbitrary but $J's$ are special edge bundles.  It is clear that  $\{\chi (G^k)\}$ is a non-decreasing integer sequence. We write $\chi_\infty (G,J)$ for the limit of the sequence  and write 
$\chi_\infty (G,J)=\infty $  if the limit is unbounded. 
\begin{theorem}
Let $G$ be any simple graph on $n$  vertices and let $J$ be a matching with edge set $E(J)={(i,i'):1 \le i \le n}$.  Then, $\chi_\infty (G,J)=\chi (G)$  for all integers $k \ge 1$ .

\end{theorem}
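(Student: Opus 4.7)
The target is to show $\chi(G^k) = \chi(G)$ for every $k \ge 1$. One inequality comes essentially for free: by condition (1) of Definition 2.1, fixing the first $k-1$ coordinates and letting only the last coordinate vary yields an induced copy of $G$ inside $G^k$, so $\chi(G^k) \ge \chi(G)$.

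For the reverse inequality, let $p = \chi(G)$ and fix a proper coloring $c : V(G) \to \{0, 1, \ldots, p-1\}$ of $G$. The naive attempt to color each $k$-tuple by one of its coordinate colors fails: because $J$ is the identity matching, a type-(2) adjacency in $G^k$ forces both endpoints to share the same last coordinate, and any coloring depending only on the $k$-th coordinate would therefore assign them the same color. The trick is to sum the coordinate colors modulo $p$:
\[
C_k(v_{i_1}, v_{i_2}, \ldots, v_{i_k}) \;=\; \sum_{j=1}^{k} c(v_{i_j}) \pmod{p}.
\]

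I would verify that $C_k$ is a proper coloring of $G^k$ by induction on $k$. The base case $k=1$ is just $c$ itself. For $k > 1$, consider any edge of $G^k$. In case (1) of Definition 2.1 the two endpoints differ only in the last coordinate, with $v_{i_k} \sim v_{j_k}$ in $G$, so their $C_k$-values differ by the nonzero quantity $c(v_{i_k}) - c(v_{j_k}) \pmod{p}$. In case (2), the identity matching forces $i_k = j_k$, so the last coordinate contributes the same summand $c(v_{i_k})$ to each vertex; the first $k-1$ coordinates form adjacent vertices of $G^{k-1}$, which by the inductive hypothesis have distinct $C_{k-1}$-values modulo $p$, and adding the common value $c(v_{i_k})$ preserves the distinction.

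I do not expect a serious obstacle here. The substantive step is recognizing the sum-of-colors-modulo-$p$ construction; after that both adjacency types dispatch by short direct arguments, with the identity-matching structure of $J$ playing its role precisely by preserving the last coordinate across type-(2) edges, which is exactly what makes the modular sum invariant under that half of the edge set contribute its variation through the inductive partial sum instead.
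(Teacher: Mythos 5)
Your proposal is correct and matches the paper's argument: the paper defines the same coloring recursively, $C_k(v_1,\ldots,v_k)=C_{k-1}(v_1,\ldots,v_{k-1})+C(v_k) \pmod p$, which is precisely your sum-of-coordinate-colors modulo $p$, and verifies properness by the same two-case analysis (type-(1) edges differ in the last summand, type-(2) edges share the last coordinate and inherit distinctness from $C_{k-1}$). No substantive difference.
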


\begin{proof}
Let   $V(G)=\{v_1,v_2,\cdots,v_n\}$ and let $\chi (G)=p$ for some positive integer $p$.  Let $G$  be colored with group elements from the additive group of integers Modulo $p$, i.e., $Z_p=\{0,1,\cdots,p-1\}$  and let $C$ be the coloring function for $G$.  We define a coloring function $C_k :V(G^k) \longrightarrow Z_p$  inductively as follows: $C_1$ is $C$, i.e.,
 $C_1(v)=C(v)$ for all $v \in V(G)$
 Having defined a proper coloring function $C_{k-1}$  for $G^{k-1}$, we define $C_k$  by, 
 \begin{equation}
 C_k(v_1,v_2,\cdots ,v_k)=C_{k-1}(v_1,v_2,\cdots ,v_{k-1})+C(v_k) \pmod {p}
\end{equation}
  To see that $C_k$ is a proper coloring, suppose $(v_1,v_2,\cdots,v_k) \sim (u_1,u_2,\cdots,u_k) $ then either \\
  (i)  $(v_1,v_2,\cdots,v_{k-1}) =(u_1,u_2,\cdots,u_{k-1})$ and $v_k$  is adjacent to $u_k$ in $G$  or\\
(ii) $(v_1,v_2,\cdots,v_{k-1}) \sim (u_1,u_2,\cdots,u_{k-1})$ and $v_k=u_k$  (by the special choice of $J$ ).  \\
Thus,  in case (i) $C_k(v_1,v_2,\cdots,v_k)-C_k(u_1,u_2,\cdots,u_k)=C(v_k)-C(u_k)$ not zero since $v_k$ and $u_k$ are adjacent in $G$. \\ In case (ii),\\   $C_k(v_1,v_2,\cdots,v_k)-C_k(u_1,u_2,\cdots,u_k)= C_{k-1}(v_1,v_2,\cdots,v_{k-1})-C_{k-1}(u_1,u_2,\cdots,u_{k-1})$ not zero since $v_k=u_k$ and $C_{k-1}$ is a proper coloring.  Thus we have shown 
$\chi (G^k)=p=\chi(G)$ for all $k \geq 1$.  It is worth noting that the matching in the above theorem is a special matching.

\end{proof}

\begin{theorem}
Let  $ G=(V,E) $ be any finite graph with 
$ V=\{v_1,v_2,\cdots,v_n\}$  and
 let $ J = \{ (i,j'): v_i \sim v_j \textsl{ in }  G  \} $ , then $ \chi(G^k)= \chi (G) $ for all $ k \geq 1$.
 \end{theorem}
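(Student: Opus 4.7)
The plan is to establish both inequalities $\chi(G^k) \ge \chi(G)$ and $\chi(G^k) \le \chi(G)$ directly. The lower bound is immediate: fixing the first $k-1$ coordinates to any constant tuple and letting only the last coordinate vary yields, by adjacency condition (1) of Definition 2.1, an induced copy of $G$ sitting inside $G^k$, so $\chi(G^k) \ge \chi(G)$. The content is therefore entirely in the upper bound.

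For the upper bound, let $p=\chi(G)$ and let $C:V(G)\to Z_p$ be a proper coloring of $G$. Unlike the coloring constructed in Theorem 3.1, which summed contributions from every coordinate, here I would use the much simpler non-recursive rule
\begin{equation}
C_k(v_{i_1},v_{i_2},\cdots,v_{i_k}) = C(v_{i_k}),
\end{equation}
i.e., color every vertex of $G^k$ by the color that $C$ assigns to its last coordinate. There is no need for an inductive definition.

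To check that $C_k$ is proper, suppose $(v_{i_1},\cdots,v_{i_k}) \sim (v_{j_1},\cdots,v_{j_k})$ in $G^k$. By Definition 2.1 one of two cases holds. In case (1), the first $k-1$ entries agree and $v_{i_k}\sim v_{j_k}$ in $G$, so $C(v_{i_k}) \ne C(v_{j_k})$ since $C$ is proper. In case (2), the prefix tuples are adjacent in $G^{k-1}$ and $i_k \sim j_k'$ in $J$; but the defining property of $J$ in this theorem is precisely that $i_k\sim j_k'$ in $J$ is equivalent to $v_{i_k}\sim v_{j_k}$ in $G$, so once again $C(v_{i_k})\ne C(v_{j_k})$. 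In both cases the two adjacent vertices of $G^k$ receive different $C_k$-colors, so $C_k$ is a proper $p$-coloring of $G^k$ and $\chi(G^k)\le p$.

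I do not anticipate a significant obstacle here. The whole argument hinges on the observation that $J$ is essentially a bipartite copy of $G$ under the identification $i\leftrightarrow v_i$, so both adjacency cases in Definition 2.1 already force the last coordinates $v_{i_k}, v_{j_k}$ to be adjacent in $G$. Once this is recognized, coloring by the last coordinate alone suffices and no induction on $k$ is required, in sharp contrast with the modular-sum coloring used in the matching case of Theorem 3.1.
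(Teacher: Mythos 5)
Your proposal is correct and is essentially the paper's own argument: coloring each vertex of $G^k$ by the color of its last coordinate is exactly the paper's partition of $V(G^k)$ into the sets $V \times V \times \cdots \times S_i$ (with $S_i$ the color classes of $G$), and your case analysis — condition (1) forces adjacent last coordinates directly, while condition (2) forces it through $i_k \sim j_k'$ in $J$ being equivalent to $v_{i_k} \sim v_{j_k}$ in $G$ — matches the paper's verification that each such set is independent. The only cosmetic difference is that you also spell out the lower bound $\chi(G^k) \ge \chi(G)$ via an induced copy of $G$, which the paper leaves implicit.
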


\begin{proof}
Suppose $\chi (G)=p$ and suppose $S_1,S_2,\cdots,S_p$ are the color classes of $G$.  Clearly, these sets $S_1,S_2,\cdots,S_p$  partition $V$   into independent sets.  Now consider the following partition of $V(G^k)$, 
\begin{equation}
V(G^k)=\cup_{i=1}^p  V \times V \times \cdots \times S_i .
\end{equation}
 We will show that each of $V \times V \times \cdots \times S_i$ are independent in $G^k$.  Let $(v_1,v_2,\cdots,v_{k-1},x)$  and $(u_1,u_2,\cdots,u_{k-1},y)$  be two vertices from $V \times V \times \cdots \times S_i$.  If $(v_1,v_2,\cdots,v_{k-1})= (u_1,u_2,\cdots,u_{k-1}) $, since $S_i$ is independent in $G$, $x$  and $y$  are not adjacent in G and so  $(v_1,v_2,\cdots,v_{k-1},x)$  and $(u_1,u_2,\cdots,u_{k-1},y)$ are not adjacent. \\
 On the other hand if  $(v_1,v_2,\cdots,v_{k-1})\neq (u_1,u_2,\cdots,u_{k-1}) $ and are not adjacent in $G^{k-1}$, then  $(v_1,v_2,\cdots,v_{k-1},x)$  and $(u_1,u_2,\cdots,u_{k-1},y)$ are not adjacent. Finally if $(v_1,v_2,\cdots,v_{k-1})\neq (u_1,u_2,\cdots,u_{k-1}) $ but are adjacent in $G^{k-1}$, then for an edge to exist between $(v_1,v_2,\cdots,v_{k-1},x)$  and $(u_1,u_2,\cdots,u_{k-1},y)$ it is necessary to have an edge between $x$ and $y'$ in $J$ which is impossible by the definition of $J$. This observation proves our assertion.
\end{proof}

\begin{remark}
We may note that if $\chi_\infty(G,J)$  is finite, then for every sub-graph $H$ of $G$, $\chi_\infty(H,J)$ is also finite.  Similarly, for every symmetric sub-graph $J'$   of $J$ , $\chi_\infty(G,J')$  is finite.  If $\chi_\infty(G,J)$ is infinite, then $\chi_\infty(H,J)$  and $\chi_\infty(G,J')$  are also infinite respectively for every super-graph $H$   of $G$ and for every super-graph $J'$  of $J$ .  
\end{remark}
Let $J^*$ be an edge bundle with edge set $ \{(i,j'): 1 \leq i \neq j \leq n \}$.  In other words $J^*$ is the result of removing the matching $\{ (i,i'): 1 \leq i \leq n \}$. In Theorem 3.2, suppose $G=K_n$, then $J$ is $J^*$,and we have $\chi_\infty(G,J^*) \leq n$.  This observation results in the following corollary.
\begin{corollary}
Let $G$ be any finite simple graph on n vertices, then $\chi_\infty(G,J^*) \leq n$.
\end{corollary}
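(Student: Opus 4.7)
My plan is to prove this as a straightforward combination of Theorem 3.2 applied to $K_n$ together with a monotonicity observation from Remark 3.3. The key point is that $J^*$ is defined purely in terms of the vertex index sets $I_n, I_n'$, independent of the host graph $G$, so the \emph{same} edge bundle $J^*$ can be used simultaneously with any $n$-vertex graph.

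First I would set $G=K_n$ and observe that in this case the edge bundle $J$ from Theorem 3.2, namely $\{(i,j'): v_i \sim v_j \text{ in } G\}$, coincides exactly with $J^*=\{(i,j'): 1 \leq i \neq j \leq n\}$, since every two distinct vertices of $K_n$ are adjacent. Thus Theorem 3.2 directly yields
\begin{equation}
\chi\bigl((K_n)^k\bigr) = \chi(K_n) = n \quad \text{for all } k \geq 1,
\end{equation}
hence $\chi_\infty(K_n, J^*) \leq n$.

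Next, for an arbitrary simple graph $G$ on $n$ vertices, I would show by induction on $k$ that $G^k$ is a spanning subgraph of $(K_n)^k$, where both are constructed using the same $J^*$. The base case $k=1$ is just the observation that $G \subseteq K_n$. For the inductive step, any edge of $G^k$ arises from Definition 2.1 either via case (1), from an edge of $G$ (which is also an edge of $K_n$), or via case (2), from an edge of $J^*$ together with an edge of $G^{k-1}$ (which by induction is also an edge of $(K_n)^{k-1}$); in both cases the same adjacency qualifies as an edge of $(K_n)^k$. Since a spanning subgraph of a graph has chromatic number at most that of the larger graph, we get $\chi(G^k) \leq \chi((K_n)^k) = n$ for every $k$, and therefore $\chi_\infty(G, J^*) \leq n$.

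The only subtle step is the inductive verification that $G^k \subseteq (K_n)^k$, and I do not expect it to be a real obstacle: it is essentially the content of the first half of Remark 3.3, just made explicit for this specific pair. Nothing else is needed beyond noting that the bundle $J^*$ depends only on $n$ and not on which $n$-vertex graph it is being attached to.
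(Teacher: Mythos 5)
Your proposal is correct and follows essentially the same route as the paper: the paper also observes that when $G=K_n$ the bundle of Theorem 3.2 is exactly $J^*$, so $\chi_\infty(K_n,J^*)\leq n$, and then invokes the subgraph monotonicity of Remark 3.3 to pass to an arbitrary $n$-vertex graph. Your explicit induction showing $G^k$ is a spanning subgraph of $(K_n)^k$ merely spells out the content of that remark, so nothing differs in substance.
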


Any sub-graph $J$ of $K_{nn}$   can be represented as a directed graph
 $\overline{J}$  on the vertex set $ \{1,2,\cdots,n\}$   as follows: If there is an edge from $i$  to $j'$, then we draw a directed arc from $i$   to $j$ , and if there is an edge from $i$ to $i'$, then we draw a loop at $i$ . If $J$ is a symmetric edge bundle, then $\overline{J}$   is an undirected graph with some loops. We may freely identify the vertex $v_i$  of $G$  with vertex $i$ of $\overline{J}$ .  With this definition of $\overline{J}$, we can restate all theorems we proved so far in simple manner. The Theorem 3.1 can be restated as � $\chi_\infty(G,J)=\chi(G)$ for any graph $G$   if $ \overline{J}$  is a graph of $n$  isolated loops�. The Theorem 3.2 can be restated as 
 � $ \chi_\infty(G,J)=\chi(G)$ for any graph if $\overline{J}$ is same as $G$�.   The Corollary 3.3 can be restated as
  � $ \chi_\infty(G,J) \leq n$ for any graph $G$ if $\overline{J}$  is a complete graph $K_n$  with no loops�. The next theorem says that if $G$ and $\overline{J}$  both have a common edge and  $\overline{J}$ has a loop is attached to that particular edge, then $\chi_\infty(G,J)=\infty$ .

\begin{remark}
In this article we deal with only undirected edge bundle graphs.  But all the concepts in this article can be carried over to directed graphs as well in which case starting with a directed graph $G^1=G$  each vertex of $G^{k-1}$  is replaced by a copy of $G$ and each directed arc from $x$ to $y$ of $G^{k-1}$  is replaced by an edge bundle $J$ (not necessarily symmetric) where $I_n$  is identified with the copy of $G$ corresponding to vertex $x$  and $I'_n$ identified with the copy of  $G$ corresponding to the vertex $y$ , furthermore the edge bundle can be replaced by an arc bundle.  For now however we have only the symmetric edge bundles, thus we deal with only undirected $\overline{J}$.  
\end{remark}

\begin{theorem}
Let $G$ be any simple finite graph with an edge between vertices $v_i$  and $v_j$ . If $J$ contains edges $(i,i') ,(i,j')$ and $(j,i')$ then $\chi_\infty(G,J)=\infty$.
\end{theorem}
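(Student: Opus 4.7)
The plan is to exhibit arbitrarily large cliques inside $G^k$, which forces $\chi(G^k) \to \infty$ and hence $\chi_\infty(G,J) = \infty$. Under the hypothesis $v_i \sim v_j$ in $G$ and $(i,i'), (i,j'), (j,i') \in E(J)$ (in the language of $\overline{J}$: a loop at $i$ together with the edge $ij$), I would explicitly construct, for each $k \geq 1$, a clique of size $k+1$ in $G^k$.

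For each $k$, define $k+1$ vertices: let $S_0 = (v_i, v_i, \ldots, v_i)$ and for $1 \leq \ell \leq k$ let $S_\ell$ be the tuple with $v_j$ in position $\ell$ and $v_i$ in every other position. The central claim is that $\{S_0, S_1, \ldots, S_k\}$ induces a complete subgraph of $G^k$, from which $\chi(G^k) \geq k+1$ follows immediately. I would prove this claim by induction on $k$, applying Definition \ref{L:example} (the recursive adjacency rule).

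The base case $k=1$ is just the given edge $v_i \sim v_j$ in $G$. For the inductive step, fix indices $\ell < m$ and check adjacency of $S_\ell$ and $S_m$ in $G^k$ by splitting on the position of the last coordinate. If $m < k$, then both tuples end in $v_i$; after stripping that coordinate we recover the analogous configuration inside $G^{k-1}$, which is adjacent by the inductive hypothesis, and the edge lifts to $G^k$ by rule (2) using the loop $(i,i') \in J$. If $m = k$ and $\ell = 0$, the first $k-1$ coordinates agree and the last coordinates $v_i, v_j$ are adjacent in $G$, so rule (1) applies. If $m = k$ and $0 < \ell < k$, the first $k-1$ coordinates form $S_\ell$ and $S_0$ inside $G^{k-1}$ (adjacent by induction), while the last coordinates $v_i, v_j$ are connected in $J$ by $(i,j') \in J$ (together with its symmetric partner $(j,i')$), so rule (2) applies.

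The main obstacle is really just bookkeeping: the three edges of $J$ that are hypothesized correspond exactly to the three scenarios the case analysis produces — the loop handles the case where both tuples end in $v_i$, the edge $(i,j')$ handles the case where one tuple has the distinguished $v_j$ in the final slot, and the underlying $G$-edge $v_i v_j$ handles the degenerate case when the first $k-1$ coordinates already coincide. No quantitative estimate or structural graph-theoretic tool beyond the definition is required, so once the clique $\{S_0,\ldots,S_k\}$ is identified the verification is routine.
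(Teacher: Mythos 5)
Your proposal is correct and follows essentially the same route as the paper: both exhibit the $(k+1)$-clique consisting of the all-$v_i$ tuple together with the $k$ tuples having a single $v_j$, and verify adjacency inductively via rule (1) for the coinciding-prefix case and rule (2) using the loop $(i,i')$ and the edges $(i,j'),(j,i')$ for the remaining cases. Your case analysis matches the paper's inductive construction of the sets $S_t$, so no further comment is needed.
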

\begin{proof}
We will show that the clique number $\omega (G^k)$ is at least $k+1$  for $k \ge 1$.  Clearly $v_i$  and $v_j$  forms a $K_2$  in $G^1=G$.  The set $S_2= \{(v_i,v_j),(v_j,v_i),v_i,v_j) \}$ forms a $K_3$  in  $G^2$ since $(v_i,v_j)$  is adjacent to $(v_i,v_j)$  by condition (1) of Definition 2.1 and $(v_j,v_i)$ is adjacent to both $(v_i,v_j),(v_i,v_j)$ and  by condition of (2) of Definition 2.1. 
Adjoining the vertex $v_i$  to each element of $S_2$  results in a $K_3$ in $G^3$ by condition (2) since $J$ contains the edge $(i,i')$.  Thus $(v_i,v_i,v_i),(v_j,v_i,v_i)$ and $(v_i,v_j,v_i)$ forms a $K_3$  in $G^3$. Furthermore all three vertices are adjacent to
$ (v_i,v_i,v_j)$.  To see this note that $(v_i,v_i,v_j)$  is adjacent to 
$(v_i,v_i,v_i)$ by condition  (1) and $(v_i,v_i,v_j)$   is adjacent to   $(v_j,v_i,v_i)$ and $(v_i,v_j,v_i)$ by condition (2) because $(v_i,v_i)$ is adjacent both $((v_j,v_i),(v_i,v_j)$  and in $G^2$.  Thus the set   $S_3=\{(v_i,v_i,v_i),(v_j,v_i,v_i),(v_i,v_j,v_i),(v_i,v_i,v_j) \}$ forms a   $K_4$ in $G^4$.  Now suppose the set 
$S_{t-1}=\{ (v_i,v_i,\cdots,v_i),(v_j,v_i,\cdots,v_i),(v_i,v_j,\cdots,v_i),\cdots,(v_i,v_i,\cdots,v_j) \}  \subset V^{t-1}$ forms a complete graph $K_{t}$ on $t$ vertices in $G^{t-1}$.  To each vertex in $S_{t-1}$  concatenating the coordinate $v_i$ results in a set of $t$ vertices in   $G^t$ which form a complete graph by condition (2).  All these vertices are adjacent to $(v_i,v_i,\cdots,v_i,v_j)$.  To see this first note that by condition (1) $(v_i,v_i,\cdots,v_i,v_j) \sim (v_i,v_i,\cdots,v_i,v_i)$  and by condition (2) it follows that 
$(v_i,v_i,\cdots,v_i,v_j) $ is adjacent to $(v_j,v_i,\cdots,v_i,v_i) ,
(v_i,v_j,\cdots,v_i,v_i),\cdots,(v_i,v_i,\cdots,v_j,v_i)$.
Thus,\\ $S_t=\{ (v_i,v_i,\cdots,v_i,v_i),(v_j,v_i,\cdots,v_i,v_j),
(v_i,v_j,\cdots,v_i,v_j),\cdots,(v_i,v_i,\cdots,v_i,v_j) \}$  \\
forms a complete graph on t+1 vertices in $G^t$. We have shown that 
$\omega (G^k) \ge k+1$ for all $k \ge 1$.

\end{proof}

\begin{theorem}
Let $G$ be a simple graph with vertex set $V(G)=\{ v_1,v_2,\cdots,v_n \}$ and
 let $J$ be a symmetric sub-graph of $K_{nn}$.  For any $i \neq j$  with 
 $i,j \in \{ 1, 2, \cdots,n \}$ , if $J$ does not contain both the edge $(i,i')$ and the pair
 $(i,j'),(j,i')$ , then $\chi_\infty(G,J) \leq 2n$.
\end{theorem}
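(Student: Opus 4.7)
The plan is to induct on $k$ and to partition $V(G^k)$ by the type of the last coordinate. Let $L = \{i : (i,i') \in J\}$ and $N = \{1,\ldots,n\} \setminus L$. The hypothesis, together with the symmetry of $J$, implies that for every $i \in L$ and $j \neq i$ we cannot have both $(i,j')$ and $(j,i')$ in $J$; equivalently, every loop-vertex of $\overline J$ is isolated from the non-loop edges of $\overline J$, so all non-loop edges of $\overline J$ lie entirely inside $N$.

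The base case $k=1$ is immediate since $\chi(G) \leq n \leq 2n$. For the inductive step, I would split $V(G^k) = V_N \sqcup V_L$ with $V_N = \{u : u_k \in N\}$ and $V_L = \{u : u_k \in L\}$, and then establish four structural facts: (i) for each $j \in N$, the layer $\{u_k = j\}$ is independent (its only internal edges would be Type 2 using a loop at $j$, which does not exist); (ii) for each $i \in L$, the layer $\{u_k = i\}$ is isomorphic to $G^{k-1}$ via Type 2 loop edges; (iii) combining these with Type 1 edges between $L$-layers, the induced subgraph on $V_L$ is isomorphic to the Cartesian product $G^{k-1}\,\square\,G[L]$, whose chromatic number by Sabidussi's theorem equals $\max(\chi(G^{k-1}), \chi(G[L]))$; (iv) the only edges between $V_N$ and $V_L$ are Type 1 edges, since the hypothesis rules out Type 2 non-loop edges crossing $L$ and $N$.

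I would then color each $V_N$-vertex $(u_1,\ldots,u_{k-1},j)$ by $j \in \{1,\ldots,n\}$, which is proper on $V_N$ since each $N$-layer is independent and distinct $N$-layers use distinct colors; and color $V_L$ using a proper coloring of $G^{k-1}\,\square\,G[L]$ drawn from the disjoint palette $\{n+1,\ldots,2n\}$. Because the two palettes are disjoint, the Type 1 cross edges between $V_N$ and $V_L$ are handled automatically.

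The main obstacle is fitting the $V_L$ coloring into the $n$ colors $\{n+1,\ldots,2n\}$, which reduces to showing $\max(\chi(G^{k-1}),|L|) \leq n$; the naive inductive bound $\chi(G^{k-1}) \leq 2n$ is not sharp enough. I would address this by strengthening the inductive claim to $\chi(G^k) \leq n+|L|$ (which still yields the theorem since $n+|L| \leq 2n$); then the $V_L$-part uses at most $n+|L|$ colors, and with a disjoint palette of total size $|N|+(n+|L|)=2n$ the induction delivers $\chi(G^k) \leq 2n$. Propagating the sharper claim $\chi(G^k)\leq n+|L|$ requires letting the two palettes overlap, which becomes a list-coloring problem: at each $L$-vertex $(u_1,\ldots,u_{k-1},i)$ the forbidden colors are exactly $\{j \in N : j \sim i \text{ in } G\}$, so the list has size at least $|L|+1$, and this flexibility together with the Cartesian-product structure of $V_L$ should suffice.
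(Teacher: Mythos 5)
Your structural analysis is correct and is a nice way to see the hypothesis: facts (i)--(iv) all hold (each non-loop layer is independent, each loop layer induces $G^{k-1}$, $V_L$ induces the Cartesian product $G^{k-1}\,\square\,G[L]$, and all $V_N$--$V_L$ edges are Type 1), and you correctly identify that the naive disjoint-palette induction does not close. But the proposed repair is a genuine gap, not just an unfinished detail, and as designed it cannot be completed. Because you color every $V_N$-vertex by its last coordinate, every vertex of a fixed loop layer $\{u : u_k=i\}$, $i\in L$, has exactly the same forbidden set $F_i=\{j\in N: j\sim_G i\}$; since that layer induces an entire copy of $G^{k-1}$, any completion requires a proper coloring of $G^{k-1}$ inside a common list of size (palette size)$-|F_i|$. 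Under your strengthened palette of size $n+|L|$ this list has size $n+|L|-|F_i|$, which in the unavoidable special case $G=K_n$ with $|L|=r$ loops equals $2r$; for $r=1$, $n\ge 3$ you would need to properly color a graph containing triangles (indeed containing $K_n$) with $2$ colors. So the strengthened claim $\chi(G^k)\le n+|L|$ cannot be propagated by this scheme, and the appeal to ``list size at least $|L|+1$ plus the product structure'' has no theorem behind it: Sabidussi's identity $\chi(A\,\square\,B)=\max(\chi(A),\chi(B))$ has no list-coloring analogue, and here the lists are constant on each layer, which is the worst case. Note that the paper's Theorem 4.1 ($K_3$ with one loop, $\chi_\infty=4=n+r$) shows both that the sharper bound can be tight and that any optimal coloring must refuse to make the non-loop layers monochromatic by last coordinate --- which is precisely the choice your scheme is locked into.

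For comparison, the paper avoids this trap as follows: by the monotonicity in Remark 3.1 it reduces to $G=K_n$ and the maximal bundle $J_r$ (loops at $1,\dots,r$ together with all non-loop pairs among the remaining $n-r$ vertices), and then colors $G^k$ inductively by assigning a color to each pair (color class $A_p^{k-1}$ of $G^{k-1}$, last coordinate $v_q$) via an explicit $n\times 2n$ table: the first $r$ rows (loop coordinates) are shifts of $1,\dots,2n$ by two, so each loop row and each column is rainbow, while the last $n-r$ rows alternate a fixed pair of colors of the opposite parity, so columns stay rainbow but only $2(n-r)$ extra colors are spent. The essential difference from your plan is that the color of a vertex depends on both its $G^{k-1}$-class and its last coordinate, i.e., the $N$-layers are deliberately not monochromatic; if you want to salvage your decomposition, replace ``color $V_N$ by last coordinate'' with a similar mixed assignment on pairs (class, last coordinate), which is exactly what the table accomplishes within $2n$ colors.
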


\begin{proof}
From Remark 3.1, it is sufficient to prove the result for $G=K_n$ .  Given any symmetric  $J$ it may contain $r$  many edges of type $(i,i')$ ,$0 \leq r \leq n$.   From Remark 3.1 it is sufficient prove the theorem for maximal $J$ satisfying the condition of containing subset $(i,i')$ or ${(i,j'),(j,i')}$ but not both.  If $r=0$, then the result follows from Theorem 3.2.  If $r=n$, then the result follows from Theorem 3.1.  We need to prove the result for values of $r$  ranging from $1$   to $n-1$.  
Since $G=K_n$ , for any $r$, there is no loss of generality in assuming that $J$  contains $(1,1'),(2,2'),\cdots,(r,r')$ . The condition � $J$ contains either $(i,i')$ or$ (i,j')$ and $(j',i)$ but not both types � excludes edges of type $(i,j')$  with $1 \leq i \neq j \leq r$.  For any$i$  and $j$  with $r+1 \leq i \neq j \leq n$ , we can include edges of type $(i,j')$  in $J$.  Thus, for a given value of $r$, the edge set of the maximal edge bundle $J_r$ is given by,
\begin{equation}
J_r=\{ (i,i'):1 \leq i \leq r\} \cup \{ (i,j'): r+1 \leq i \neq \leq n \}.
\end{equation}
 
Using the graph description of $J$, the graph $\overline{J_r}$  is simply a union of a collection of $r$ isolated loops and a complete graph on $n-r$ vertices.  We will now exhibit a coloring scheme to color $G^k$ with $2n$ colors assuming that   $G^{k-1}$can be colored with $2n$  colors.  Clearly   $G^1=K_n$can be colored in $n$  colors, thus it can be colored in $2n$  colors with several empty color classes. 
Let $A_1^{k-1},A_2^{k-1},\cdots,A_{2n}^{k-1}$  be the color classes in $G^{k-1} $.  Using these color classes we now partition the vertex set of   as follows:
 \begin{equation}
 T_{p.q}=\{ (v_{i_1},v_{i_2},\cdots,v_{i_{k-1}},v_q):  (v_{i_1},v_{i_2},\cdots,v_{i_{k-1}}) \in A_p^{k-1} \}, 1 \leq p \leq 2n,1 \leq q \leq n.
 \end{equation}
 
We may note that each of these sets, i.e., $T_{p,q}$, are independent sets, but they are not maximal independent sets.  The Table 1 shows a coloring scheme.  The colors are just integers $1, 2, �, 2n$.  The color scheme is presented as a rectangular array of   $n$ rows and $2n$  columns.  The color assigned in the entry corresponding to the $i^{th}$  row and $j^{th}$ column corresponds to the color assigned to the vertices in the set $T_{j,i}$  where $1 \leq i \leq n,1 \leq j \leq 2n$.  We use colors  $1,2,\cdots,2n$ in the first row, and then fill subsequent $r-1$  rows by rotating the colors by two units at a time, i.e., the second row uses colors $3,4,\cdots,2n,1,2$   and so on. Thus the first $r$ rows have permutations of all colors so that no two elements in the same row are same. We may note that the effect of shifting by two units at a time is that the first $r$ colors in the odd numbered columns are odd and the first $r$  colors in the even numbered columns are even. The rest of the colors in all odd numbered columns are even numbers from $2$ to $2(n-r)$ and the colors in all even numbered columns are odd numbers from $1$ to $2(n-r)-1$ .
\\
\begin{table}[h]\footnotesize
  \caption{Coloring Scheme}
\begin{tabular}{|c|c|c|c|c|c|c|c|}
\hline
.& $A_1^{k-1}$&$A_2^{k-1}$&$A_3^{k-1}$&$A_4^{k-1}$&$\cdots$&$A_{2n-1}^{k-1}$ & $A_{2n}^{k-1}$\\
\hline
$v_1$ & 1& 2& 3& 4& $\cdots$ & $2n-1$ & $2n$\\
\hline
$v_2$ & 3& 4& 5& 6& $\cdots$ & $1$ & $2$\\
\hline
$v_3$ & 5& 6& 7& 8& $\cdots$ & $3$ & $4$\\
\hline
$v_4$ & 7& 8& 9& 10& $\cdots$ & $5$ & $6$\\
\hline
$ \vdots$ & $ \vdots$& $ \vdots$& $ \vdots$& \vdots & $\vdots$ & $$ \vdots$$ & $$ \vdots$$\\
\hline
$v_r$ & $2r-1$& $2r$& & & $\cdots$ &  & \\
\hline
$v_{r+1}$ & 2& 1& 2& 1& $\cdots$ & $2$ & $1$\\
\hline
$v_{r+2}$ & 4& 3& 4& 3& $\cdots$ & $4$ & $3$\\
\hline
$ \vdots$ & $ \vdots$& $ \vdots$& $ \vdots$&  $\vdots$ & $$ \vdots$$ & $$ \vdots$$ & \vdots$$\\
\hline
$v_{n-1}$ & $2(n-r-1)$& $2(n-r)-3$& $2(n-r-1)$& $2(n-r)-3$& $\cdots$ & $2(n-r-1)$ & $2(n-r)-3$\\
\hline
$v_n$ & $2(n-r)$& $2(n-r)-1$& $2(n-r)$& $2(n-r)-1$& $\cdots$ & $2(n-r)$ & $2(n-r)-1$\\
\hline
\end{tabular}
\end{table}
\\
\\
To verify that the coloring scheme is a proper coloring (not necessarily optimal coloring) let us consider two vertices 
$\alpha =(v_{i_1},v_{i_2},\cdots,v_{i_k}) $ and $\beta =(u_{j_1},u_{j_2},\cdots,u_{j-k})$     in $G^k$.  There is an edge between $\alpha$  and $\beta$  if either (i) $(v_{i_1},v_{i_2},\cdots,v_{i_{k-1}}) = (u_{j_1},u_{j_2},\cdots,u_{j-k})$   and $v_{i_k} \sim v_{j_k}$ in $G$  or 
(ii)   $(v_{i_1},v_{i_2},\cdots,v_{i_{k-1}}) \sim (u_{j_1},u_{j_2},\cdots,u_{j-k})$ in $G^{k-1}$  and $v_{i_k} \sim u_{j_k}$ in $J_r$.  Since the equality of first $k-1$  coordinates is possibly only if both vertices $\alpha$ and $\beta$ are in the same column, the case (i) is possible along columns, and since $G=K_n$  there are possible edges between two sets in the same column.  Thus to ensure proper coloring each column must use distinct colors. This is true in the coloring scheme in Table 3.1 since the first $r$  colors in each column are consequence of permutations of $2n$  colors, and are either all even or all odd, while the rest of $n-r$  colors in each column are distinct and opposite in parity (to the first $r$ colors). This leaves us with case (ii), then $(v_{i_1},v_{i_2},\cdots,v_{i_{k-1}}) \sim (u_{j_1},u_{j_2},\cdots,u_{j_{k-1}})$  in $G^{k-1}$ and $v_{i_k} \sim u_{j_k}$ in $J_r$.  In this case $\alpha$  and $\beta $ must be in different columns.  If $1 \leq i_k \leq r$, then an edge between $\alpha$ and $\beta$ is possible only if $v_{i_k} =u_{j_k}$ , i.e., only if $\alpha$ and $\beta $ are in the same row in which they have different colors since the first $r$  rows are permutations.  If $r+1 \leq i_k \leq n$, there is no edge between $\alpha$ and $\beta$  if both $\alpha$ and $\beta$ are in the same row.  Thus we can use same color within in row for rows $r+1,r+2,\cdots,n$   and distinct colors for distinct rows.  This is true since we use a pair of colors $2i-1,2i$  for the row $r+1$  for1 $ \leq i \leq n-r$ . This completes the proof.

\end{proof}
Combining Theorem 3.4 and Theorem 3.5 we have shown the following result for complete graphs.

\begin{theorem}
$\chi_\infty(K_n,J)$  is finite if and only if $J$ does not contain the edge triple $(i,i'),(i,j')$, $(j,i')$   and for any $i \neq j $  with $i,j \in \{1,2,\cdots,n \}$.
Using the graph description of $J$, we may restate this as: $\chi_\infty(K_n,J)$  is finite if and only if the graph 
$\overline{J}$  on $n$ vertices is a union of isolated loops and a complete graph.

\end{theorem}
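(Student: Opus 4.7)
The plan is to deduce the characterization directly by combining Theorem 3.4 (for the ``only if'' direction), Theorem 3.5 (for the ``if'' direction), and the sub-bundle monotonicity noted in Remark 3.1. With those three ingredients in hand the argument is short.

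For the ``only if'' direction I would argue by contrapositive. Suppose $J$ contains the triple $(i,i'), (i,j'), (j,i')$ for some $i \neq j$. Since $G = K_n$ already contains the edge $v_i v_j$, Theorem 3.4 applies verbatim and forces $\chi_\infty(K_n,J) = \infty$, contradicting finiteness.

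For the ``if'' direction, assume $J$ contains no such triple. Let $L = \{i : (i,i') \in J\}$ and $r = |L|$. The no-triple hypothesis combined with the symmetry $(i,j') \in J \Longleftrightarrow (j,i') \in J$ forces every $i \in L$ to be incident in $\overline{J}$ only to its own loop; equivalently, all non-loop edges of $\overline{J}$ lie in the induced subgraph on $\{1,\ldots,n\}\setminus L$. After relabeling so that $L = \{1,\ldots,r\}$, the bundle $J$ is therefore a sub-bundle of the maximal edge bundle
\[
J_r \;=\; \{(i,i') : 1 \leq i \leq r\} \,\cup\, \{(i,j') : r+1 \leq i \neq j \leq n\}
\]
of Theorem 3.5. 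That theorem yields $\chi_\infty(K_n, J_r) \leq 2n$, and since the $J$-construction produces a spanning subgraph of the $J_r$-construction at every level (by a direct induction on $k$ using Definition 2.1), every proper coloring of the latter restricts to a proper coloring of the former. Hence $\chi_\infty(K_n,J) \leq \chi_\infty(K_n,J_r) \leq 2n$.

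The graph-theoretic restatement is then just a translation: the no-triple condition says precisely that no vertex of $\overline{J}$ carries both a loop and a non-loop edge, so (after relabeling) $\overline{J}$ is a spanning subgraph of the disjoint union of $r$ isolated loops and $K_{n-r}$. I do not anticipate a real obstacle in this proof: the only bookkeeping worth being careful about is checking that sub-bundle monotonicity really propagates the numerical bound $2n$ and not merely the qualitative finiteness, which is handled by the spanning-subgraph observation above.
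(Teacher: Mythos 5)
Your proposal is correct and is essentially the paper's own argument: the paper proves this theorem simply by combining Theorem 3.4 (the forbidden triple forces $\chi_\infty=\infty$, applicable since $K_n$ has the edge $v_iv_j$) with Theorem 3.5 (whose proof already reduces, via Remark 3.1, to the maximal bundles $J_r$ and gives the bound $2n$). Your extra care in spelling out the sub-bundle monotonicity and the relabeling to $J_r$ is just an explicit rendering of steps the paper delegates to Remark 3.1 and the proof of Theorem 3.5.
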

\section{Complete Analysis for $K_2$ and $K_3$}
From Remark 3.1 to make a complete analysis for a graph $G$, it is convenient to consider the partial order of all possible symmetric edge bundles under sub-graph containment.  For $K_2$  this partial order $\Im_2$ contain eight elements, which are described below,
 $J_1=\emptyset, J_2=\{(1,1')\} ,J_3=\{(2,2') \},J_4=\{(2,2'),(1,1') \}  ,  J_5=\{(1,2'),(2,1')\},J_6=\{(1,2'),(2,1'),(1,1')\}
 J_7=\{(1,2'),(2,1'),(2,2') \},J_8=\{(1,1'),(1,2'),(2,1'),(2,2') \} $.
For each $J$ , we will calculate $\chi_\infty(G,J)$ .\\
1.  For  $J_1=\emptyset$, $G^k$  is simply $2^{k-1}$  many edges and $\chi_\infty(K_2,J_1)=2$ .\\
2. For $J_2=\{(1,1')\}$ or $\{(2,2')\} $, $G^k$ is a tree on $2^k$ vertices.  We can describe these trees as follows:   $G^1=K_2$ is an edge, having constructed $G^{k-1}$ on $2^{k-1}$ vertices, attaching an edge at each vertex of  $G^{k-1}$, a total of $2^{k-1}$ edges, results in $G^k$.  It is easy to see $G^k$ is tree and $\chi_\infty(K_2,J_2)=\chi_\infty(K_2,J_3)=2$.\\ 
3. For $J_4=\{(2,2'),(1,1')\},J_5=\{(1,2'),(2,1')\}$ it is easy to see that $G^k$  is   the  $k$-dimensional hypercube $Q_k$.  Since hypercube is a bipartite graph we have $\chi_\infty(K_2,J_4)=\chi_\infty(K_2,J_5)=2$.\\
4.  For $J_6=\{(1,2'),(2,1'),(1,1')\} $ or  $J_7=\{(1,2'),(2,1'),(2,2') \}$, by Theorem 3.4 we have  
 $\chi_\infty(K_2,J_6)= \infty$ and $\chi_\infty(K_2,J_7)=\infty$. \\
5.  For $J_8=\{(1,1'),(1,2'),(2,1'),(2,2') \}$, $G^k=K_{2^k}$,  a complete graph on $2^k$  vertices and $\chi_\infty(K_2,J_8)=\infty$ .

In case of $K_3$, the partial order $\Im_3$   has too many elements if we use labeled edge bundles, and has twenty distinct elements if we use unlabeled edge bundles.  Using graph description of edge bundles $\chi_\infty(K_3,J)=\infty$  if $\overline{J}$  has a loop attached to an edge. Then we are left with three maximal edge bundles for which $\chi_\infty(K_3,J)$  is finite.  The graphs $\overline{J}$ for these three edge bundles can be described as follows:\\
 (i) $\overline{J}$ is three isolated loops\\
  (ii) one isolated loop and an edge or\\
  (iii) a complete graph on three vertices.  \\
  Theorem 3.1 is applicable in case (i) and  Corollary 3.3 is applicable in case (iii) and in both cases we $\chi_\infty(K_3,J)=3$ conclude .  In case (ii) Theorem 3.5 suggests $\chi_\infty(K_3,J) \leq 6$ .  We will now prove 
   $\chi_\infty(K_3,J)=4$  in case (ii).

\begin{theorem}
 If $J=\{(1,1,'),(2,3'),(3,2') \}$ , then $\chi_\infty(K_3,J)=4$ .
\end{theorem}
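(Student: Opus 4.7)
The plan is to establish the two inequalities $\chi_\infty(K_3,J) \ge 4$ and $\chi_\infty(K_3,J) \le 4$ separately. For the lower bound I will show that $G^2$ is already not $3$-colourable, so that $\chi_\infty(K_3,J)\ge \chi(G^2)\ge 4$ by the monotonicity of $\{\chi(G^k)\}$ noted at the start of Section~3. For the upper bound I will construct, by induction on $k$, a proper $4$-colouring of $G^k$ driven by a small auxiliary function $g:\{1,2,3,4\}\times\{1,2,3\}\to\{1,2,3,4\}$.

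For the lower bound I first analyse $G^2$. Unpacking the adjacency rules with $G=K_3$ and the given $J$, the nine vertices $(v_i,v_j)$ split into three ``row'' copies of $K_3$ (from rule~(1)), while the bundle $J$ contributes a further $K_3$ on the column-$1$ vertices $\{(v_i,v_1)\}$ (via the loop $1\sim 1'$) together with enough row and cross edges to form a $K_{3,3}$ between the column-$2$ and column-$3$ vertices (the $3$ row edges plus the $6$ cross edges from $2\sim 3'$). Suppose $G^2$ has a proper $3$-colouring. The column-$1$ triangle forces all three colours on $\{(v_i,v_1)\}$, so WLOG $(v_i,v_1)$ receives colour $i$. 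In the $K_{3,3}$ the two sides must use disjoint colour-sets and together at most three colours, so at least one side is monochromatic. If all three $(v_i,v_2)$ share a single colour $c$, the row edges $(v_i,v_1)\sim(v_i,v_2)$ give $c\ne i$ for every $i\in\{1,2,3\}$, so $c$ would be a fourth colour, a contradiction; a monochromatic column~$3$ is symmetric. Hence $\chi(G^2)\ge 4$.

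For the upper bound I define $g$ by the table $g(c,1)=c$ for $c\in\{1,2,3,4\}$, together with $g(1,2)=2,\ g(2,2)=1,\ g(3,2)=1,\ g(4,2)=2$ and $g(1,3)=3,\ g(2,3)=4,\ g(3,3)=4,\ g(4,3)=3$. A quick inspection gives three properties: (P1) each row $g(c,\cdot)$ consists of three distinct values; (P2) $g(\cdot,1)$ is a bijection of $\{1,2,3,4\}$; (P3) the image of $g(\cdot,2)$ lies in $\{1,2\}$ and the image of $g(\cdot,3)$ lies in $\{3,4\}$, so these images are disjoint. I set $f_1(v_i)=i$ and recursively define
$$f_k(v_{i_1},\ldots,v_{i_k}) \;=\; g\bigl(f_{k-1}(v_{i_1},\ldots,v_{i_{k-1}}),\,i_k\bigr).$$
The verification that $f_k$ is proper proceeds by induction on $k$ with a case split on the two adjacency rules. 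A rule~(1) edge (same first $k-1$ coordinates, distinct last coordinates) is handled by (P1); a rule~(2) edge with $(i_k,j_k)=(1,1)$ is handled by (P2) together with the inductive hypothesis $f_{k-1}(\alpha)\ne f_{k-1}(\beta)$; and a rule~(2) edge with $(i_k,j_k)\in\{(2,3),(3,2)\}$ is handled by (P3) regardless of the previous-level colours. This gives $\chi(G^k)\le 4$ for every $k$, and combined with the lower bound yields $\chi_\infty(K_3,J)=4$.

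The main obstacle is manufacturing the function $g$: (P1) is a ``row'' constraint coming from the $K_3$'s inside $G$, (P2) is the ``column-$1$'' constraint imposed by the loop $(1,1')$ of $J$, and (P3) is the ``columns~$2$ and~$3$'' constraint imposed by the crossing edges $(2,3')$ and $(3,2')$. These three conditions must coexist on the same $4\times 3$ table, and once $g(\cdot,1)$ is a bijection the only way to satisfy (P3) is to confine the second and third columns to complementary pairs such as $\{1,2\}$ and $\{3,4\}$; the table above is one verified choice, and no further subtleties arise in the induction.
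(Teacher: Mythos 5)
Your proof is correct, and while it follows the same two-step outline as the paper (show $\chi(G^2)\ge 4$, then $4$-colour every $G^k$ by induction), both halves are executed differently. For the lower bound the paper runs a case analysis on an assumed $3$-colouring of $G^2$ (fixing colours on the triangle $(v_1,v_1),(v_2,v_1),(v_3,v_1)$ and chasing forced colours to a contradiction), whereas you extract the structure directly: the column-$1$ vertices form a $K_3$ and columns $2$ and $3$ span a complete bipartite $K_{3,3}$, so with three colours one side of the $K_{3,3}$ must be monochromatic, which the row edges to column $1$ forbid; this is shorter and less error-prone than the paper's enumeration. For the upper bound the paper recursively builds four explicit independent sets $A_k,B_k,C_k,D_k$ from the level-$(k-1)$ partition (the printed definitions even contain typos that the reader must repair), while you encode the induction in a single transition table $g$ with the three properties (P1) row-injectivity for rule-(1) edges, (P2) injectivity of $g(\cdot,1)$ for the loop $(1,1')$, and (P3) disjoint images of $g(\cdot,2)$ and $g(\cdot,3)$ for the crossing edges $(2,3'),(3,2')$; the propriety of $f_k$ then follows mechanically, and your colour classes are just the fibres of $f_k$, so the two inductions are equivalent in substance but yours is easier to verify. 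Your appeal to the monotonicity of $\{\chi(G^k)\}$ is the paper's own assertion, and for this $J$ it is immediate anyway since the loop $(1,1')$ embeds $G^{k-1}$ into $G^k$ via $x\mapsto(x,v_1)$; it would cost you one sentence to make that explicit rather than cite it.
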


\begin{proof}
We will first prove three colors are not sufficient for $G^2$  and four colors will suffice, then we will prove theorem by recursively constructing independent sets in $G^k$  for all $k \geq 3$.  Let   $V(G)= \{v_1,v_2,v_3 \}$.  Since $G=K_3$, $v_i \sim v_j$   for $i \neq j $.  In $G^2$ the vertex set can be partitioned in to three groups $\{(v_1,v_1),(v_1,v_2),(v_1,v_3) \}$, $\{(v_2,v_1),(v_2,v_2),(v_2,v_3) \}$, and   
$\{(v_3,v_1),(v_2,v_1),(v_3,v_1) \}$  where induced sub-graph of each group is a $K_3$.  Three vertices $(v_1,v_1)$ ,$(v_2,v_1)$  and $(v_3,v_1)$ one from each group forms a $K_3$  since $J$ contains the edge $(1,1')$.  Suppose $G^2$ can be colored in three colors $a$,$b$, and $c$.  Without loss of generality let
$(v_1,v_1)$, $(v_2,v_1)$, and $(v_3,v_1)$ be assigned colors $a$, $b$, and $c$ respectively.  The vertices $(v_1,v_2)$ and   
$(v_1,v_3)$ must be assigned $b$ and $c$.  Suppose color $b$ is assigned to $(v_1,v_2)$ and suppose color $c$ is assigned to $(v_1,v_3)$. Then since $(v_3,v_3) \sim (v_3,v_1)$ with color $c$ and $(v_3,v_3) \sim (v_1,v_2)$ with color $b$, the color $a$ must be used for $(v_3,v_3)$  . Now the vertex $(v_2,v_2)$  is adjacent to $(v_3,v_3)$,$(v_2,v_1)$ and $(v_1,v_3)$  with all three colors.  So, the vertex $(v_2,v_2)$  requires a fourth color.  If the color $c$ is assigned to $(v_1,v_2)$ and color $b$ is assigned to $(v_1,v_3)$ , then since $(v_2,v_3) \sim (v_1,v_2)$  and $(v_2,v_3) \sim (v_2,v_1)$, the color $a$ must be assigned to $(v_2,v_3)$.  Then $(v_3,v_2)$  requires fourth color since 
$(v_3,v_2)$   is adjacent to vertices $(v_2,v_3)$, $(v_1,v_3)$  and $(v_3,v_1)$.  Thus we need at least four colors. For the rest of this proof we will use numbers $1, 2$, and $3$ to indicate the vertices $v_1,v_2$, and $v_3$  respectively.  It is easy to verify the following four independent sets partition of $V(G^2)$  proving 
$\chi_\infty(G^2,J)=4$:\\
$A_2=\{(1,1)\},B_2=\{(2,2),(3,1)\},C_2=\{(1,2),(2,1),(3,2)\},D_2=\{(1,3),(2,3),(3,3)\}$.\\
   
We will use induction to prove $\chi(G^k,J)=4$ for all $k \geq 2$. We have shown the result for $k=2$.  Using numbers $1, 2$, and $3$ to indicate the vertices $v_1,v_2$ and $v_3$, $V(G^{k-1}$  is the set product $I_3^{k-1}$ , where 
$I_3=\{1,2,3\}$ .  Suppose $\chi(G^{k-1},J)=4$  then $I_3^{k-1}$  is partitioned in to independent sets $A_{k-1},B_{k-1},C_{k-1}, D_{k-1}$ in $G^{k-1}$.  We partition $I_3^k$  into four independent sets in  as follows:\\
$A_k=\\
\{(a_1,\cdots,a_{k-1},2): (a_1,\cdots,a_{k-1}) \in I_3^{k-1}-B_{k-1}\} \cup \{(b_1,\cdots,b_{k-1},1):(b_1,\cdots,b_{k-1}) \in B_{k-1} \}$ \\
$B_k=\\
\{(a_1,\cdots,a_{k-1},1): (a_1,\cdots,a_{k-1}) \in I_3^{k-1}-A_{k-1} \}\cup \{(b_1,\cdots,b_{k-1},1):(b_1,\cdots,b_{k-1}) \in A_{k-1} \}$\\
$C_k=\{(a_1,\cdots,a_{k-1},1): (a_1,\cdots,a_{k-1}) \in C_{k-1}\} \cup \{(b_1,\cdots,b_{k-1},1):(b_1,\cdots,b_{k-1}) \in B_{k-1} \}$\\
$D_k=\{(a_1,\cdots,a_{k-1},1): (a_1,\cdots,a_{k-1}) \in D_{k-1}\} \cup \{(a_1,\cdots,a_{k-1},3):(a_1,\cdots,a_{k-1}) \in A_{k-1} \}$\\
 \\ 
To show $A_k$ is independent we first note that there are no edges in the set\\
 $\{(a_1,a_2,\cdots,2):(a_1,\cdots,a_{k-1}) \in I_3^{k-1}-B_{k-1} \}$    since the last coordinate is 2. There are no edges in the set 
$\{(b_1,b_2,\cdots,1):(b_1,\cdots,b_{k-1}) \in B_{k-1} \}$  since $B_{k-1}$  is an independent set in $G^{k-1}$.  There are no edges across these two sets since the last coordinate in one set is $1$ and the last coordinate in the other set is $2$.  Similar arguments show the rest of the sets are independent. This completes the proof.

\end{proof}

\section{Expansion Property}
The expansion property is crucial in many applications in communication networks and this property is particularly important to build non-blocking networks, see [4] for an excellent discussion.  However, it is important to note that in our definition of spectrum of a graph we simply mean the eigenvalues of the adjacency of a graph [6], but in [4] the spectrum refers to the eigenvalues of the Laplacian of the graph.  Definitions in this section are from �Combinatorial Problems and Exercises� by Laszlo Lovasz [5].\\

The {\em Conducatnce} of a graph $G$, $\Phi(G)$, is the minimum of $\frac{\delta_G(S)}{|S|}$ over all non-empty subsets of $V(G)$ with $ |S| \leq \frac{|V(G)|}{2}$, whetre $\delta_G(S)$ is the total number of edges joining the set $S$ to it's compliment $V(G)-S$. Graphs for which the conductance bounded from below by a positive constant are called {\em expanders}.   We may refer to $\Phi(G)$ as the {\em edge expansion coefficient}.  Similarly, we may define {\em vertex expansion coefficient} for regular graphs by taking $\delta_G(S)$  as the total number vertices in $V(G)-S$  joining to $S$.  It can be shown (see exercise 31 in section 11 in [5]) that for a regular graph with $G$  with degree $d$  that 
$\Phi(G) \geq \frac{\lambda_1 - \lambda_2}{2}$ and  
$\Phi(G) \leq 2 \sqrt{d(\lambda_1 - \lambda_2)}$ where $\lambda_1$  and $\lambda_2$  are the first and second largest eigenvalues of the adjacency matrix $A(G)$  of $G$.  If $G=K_n$, and $\overline{J}$   is $n$ loops with no edges, the self-similar graphs $\{G^k \}$   are all edge expander graphs with edge expansion coefficient greater than or equal to $\frac{n}{2}$. This is the main point of the next theorem.  The results stated in following remark will be used in the proof of the theorem.

\begin{remark}
If $A=((a_{ij}))_{n \times n}$ is a matrix with all diagonal elements equal to $a$  and all non-diagonal elements equal to $b$, then 
 $det(A)=(a-b)^{n-1} [a+(n-1)b]$ .  Similarly if $A=((B_{ij}))_{nk \times nk}$  where $B_{ij}$ is a square matrix of order $k$, and  $B_{ii}=D_{k \times k}, 1\leq n$ and $B_{ij}=E_{k \times k}$   for $i \neq j$ for some matrices $D$ and $E$, then 
 $det(A)=[det(D-E)]^{n-1} [det(D+(n-1)E]$  (see [7]).
\end{remark}

\begin{theorem}
Let $G=K_n$  and let $J$  be a matching with edge set 
$E(G)=\{(i,i'): 1 \leq i \leq n\}$. Then $\Phi(G^k) \geq \frac{n}{2}$ for $k \geq 1$.
\end{theorem}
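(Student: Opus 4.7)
The plan is to apply the spectral bound $\Phi(G)\ge (\lambda_1-\lambda_2)/2$ quoted at the beginning of Section 5, which requires $G^k$ to be regular together with a computation of its top two adjacency eigenvalues. By formula (C) of Section 2, $G^k$ is regular of degree $k(n-1)$, so $\lambda_1 = k(n-1)$ automatically. It will therefore suffice to show that $\lambda_2 = k(n-1) - n$, giving $(\lambda_1-\lambda_2)/2 = n/2$.

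First I would write down the adjacency matrix $A_k := A(G^k)$ in block form by grouping vertices according to their \emph{last} coordinate, producing $n$ blocks of size $n^{k-1}$. Within each block the induced subgraph is $G^{k-1}$, using condition (2) of Definition 2.1 together with the fact that $J$ contains every loop $(i,i')$; between two distinct blocks the adjacency reduces to an identity matching, because condition (1) yields the edges $(u_1,\ldots,u_{k-1},i) \sim (u_1,\ldots,u_{k-1},j)$ for $i\neq j$ (as $K_n$ is complete), while condition (2) contributes nothing across blocks since the matching $J$ forces equal last coordinates. Hence
\begin{equation*}
A_k \;=\; \begin{pmatrix} A_{k-1} & I & \cdots & I \\ I & A_{k-1} & \cdots & I \\ \vdots & & \ddots & \vdots \\ I & I & \cdots & A_{k-1} \end{pmatrix},
\end{equation*}
an $n\times n$ block matrix whose diagonal blocks equal $A_{k-1}$ and whose off-diagonal blocks all equal $I = I_{n^{k-1}}$.

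This is precisely the situation of Remark 5.1 with $D = A_{k-1} - \lambda I$ and $E = I$, which gives
\begin{equation*}
\det(A_k - \lambda I) \;=\; [\det(A_{k-1} - (\lambda+1)I)]^{\,n-1} \cdot \det(A_{k-1} - (\lambda - n + 1)I).
\end{equation*}
Consequently the spectrum of $A_k$ consists, with multiplicities, of the values $\mu - 1$ and $\mu + (n-1)$ as $\mu$ ranges over the spectrum of $A_{k-1}$. Starting from $A_1 = A(K_n)$ with eigenvalues $n-1$ (once) and $-1$ (with multiplicity $n-1$), a short induction on $k$ yields $\lambda_1(A_k) = k(n-1)$ from the shift $\mu_1 \mapsto \mu_1 + (n-1)$, and $\lambda_2(A_k) = k(n-1) - n$ from either of the candidates $\mu_1 \mapsto \mu_1 - 1$ or $\mu_2 \mapsto \mu_2 + (n-1)$, both of which collapse to the same value under the inductive hypothesis. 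Combining this with the Lovász inequality $\Phi(G^k) \ge (\lambda_1-\lambda_2)/2$ delivers the desired bound $\Phi(G^k)\ge n/2$.

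The only step that warrants genuine care is the block identification: one must index the block rows by the \emph{last} coordinate rather than the first, because with $J$ an identity matching the off-diagonal blocks collapse to a single matrix $I_{n^{k-1}}$ only in this ordering, and it is precisely that uniformity across off-diagonal blocks which lets Remark 5.1 be applied without any modification. Everything after that is a routine induction together with the cited spectral expansion bound.
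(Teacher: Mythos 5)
Your proposal is correct and follows essentially the same route as the paper: partition the vertices of $G^k$ by last coordinate, obtain the block form with diagonal blocks $A(G^{k-1})$ and off-diagonal blocks $I_{n^{k-1}}$, apply the determinant identity of Remark 5.1, and feed the resulting spectral gap of $n$ into the bound $\Phi \geq (\lambda_1-\lambda_2)/2$. The only cosmetic difference is that you track just the two largest eigenvalues instead of the full spectrum $\{-k,-k+n,\ldots,-k+kn\}$ computed in the paper, which in fact sidesteps a small typo in the paper's final display.
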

\begin{proof}
Let $V(G)=\{ v_1,v_2,\cdots,v_n \}$.  We will compute the spectrum of $G^k$  for $k \geq 1$  and then calculate the difference.  We Claim that the spectrum of $G^k$  is $\{-k,-k+n,-k+2n,\cdots,-k+kn\}$ for $k \geq 1$. We will prove this claim using mathematical induction.  
Since $G^1=K_n, A(G^1)=((a_{ij}))_{n \times n}$, where
 $a_{ij}=1$ if $ 1 \leq i \neq j \leq n $  and   $a_{ii}=0 $ for $1 \leq i \leq n$.  The characteristic polynomial of $G^1$  from Remark 5.1 is $ det(A(G^1)- \lambda I_n)=(- \lambda -1)^{n-1} (- \lambda +n-1)$.  This shows the spectrum of $G^1$  is $\{-1,n-1 \}$ , thus $\Phi(G^1) \geq \frac{n}{2}$.  
Assume that the spectrum of $G^{k-1}$  is 
 $\{(-(k-1),-(k-1)+n,\cdots,-(k-1)+(k-1)n \}$.  
 We will partition the vertices of $G^k$  into $n$ subsets $H_i$ as follows: \\
 $H_i=\{(x_1,x_2,\cdots,x_{k-1},v_i):(x_1,x_2,\cdots,x_{k-1}) \in V(G^{k-1}) \}$  for $i=1,2,\cdots,n$.  \\
 Since $i \sim i'$  in $J$, it follows that the induced sub-graph $[H_i]$  of $G^k$  is isomorphic to $G^{k-1}$, 
 $[H_i] \cong G^{k-1}$, for $1 \leq i \leq n$.  
 Thus $A(H_i)=A(G^{k-1})$ for $1 \leq i \leq n$.  For $i \neq j $, a vertex in $H_i$  is adjacent to another vertex $H_j$  if and only if both vertices have identical first $k-1$ coordinates.  This shows that $A(G^k)=((B_{ij}))_{n \times n}$  where $B_{ii}=A(G^{k-1})$  and $B_{ij}=I_{n^{k-1}}$  if $i \neq j$. From Remark 5.1, we have \\
 
 $det(A(G^k)- \lambda I_{n^k}) = (det[A(G^{k-1})-(\lambda+1)I_{n^{k-1}}])^{n-1} det[A(G^{k-1})-(\lambda+1-n) I_{n^{k-1}}]$.\\
Thus eigenvalues $\lambda$ of $A(G^k)$  are given by \\
 \begin{equation}
 \lambda +1 =-(k-1),-(k-1)+n,\cdots,-(k-1)+(k-1)n
 \end{equation}
  or
 \begin{equation}
 \lambda+1-n= -(k-1),-(k-1)+n,\cdots,-(k-1)+(k-1)n
 \end{equation}
Thus $\lambda=-k,-k+n,\cdots,-k+nk $   and \\
 \begin{equation}
 \Phi(G^k) \geq \frac{\lambda_1- \lambda_2}{2}
  =\frac{(-k+nk)-(-k+(n-1)k)}{2}=\frac{n}{2}
 \end{equation}
 
\end{proof}

\begin{corollary}
Let $G=K_n$  and let $J$ be a matching with edge set
 $E(J)=\{(i,i'):1 \leq i \leq n \}$.  Then the vertex expansion coefficient of $G^k$  is greater than $\frac{1}{2k}$  for $k \geq 1$.
\end{corollary}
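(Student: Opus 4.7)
The plan is to deduce the corollary from Theorem 5.2 by a standard edge-count versus vertex-count comparison, using the fact that $G^k$ is regular of known degree. By formula (C) in Section 2, since $G=K_n$ is regular of degree $n-1$ and $J$ is a matching, every vertex of $G^k$ has degree $d_k := k(n-1)$; in particular $G^k$ is $d_k$-regular. Theorem 5.2 has already established that the edge expansion coefficient satisfies $\Phi(G^k)\ge n/2$.

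Next, I would translate the edge bound into a vertex bound. Fix any nonempty $S\subseteq V(G^k)$ with $|S|\le |V(G^k)|/2$, and let $\partial S$ denote the set of vertices in $V(G^k)\setminus S$ that are adjacent to some vertex of $S$; this is the quantity used to define the vertex expansion coefficient. Every edge in the edge boundary $\delta_{G^k}(S)$ has its outer endpoint in $\partial S$, and each vertex of $\partial S$ is incident to at most $d_k$ such edges because $G^k$ is $d_k$-regular. Therefore
\begin{equation}
\delta_{G^k}(S) \;\le\; d_k\,|\partial S|,
\end{equation}
which rearranges to
\begin{equation}
\frac{|\partial S|}{|S|} \;\ge\; \frac{\delta_{G^k}(S)}{d_k\,|S|} \;\ge\; \frac{\Phi(G^k)}{d_k} \;\ge\; \frac{n/2}{k(n-1)} \;>\; \frac{1}{2k},
\end{equation}
the last inequality because $n>n-1$. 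Taking the infimum over admissible $S$ yields the claimed lower bound on the vertex expansion coefficient of $G^k$.

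There is no real obstacle here; the only step requiring any care is the observation that the outer endpoints of the edges in $\delta_{G^k}(S)$ all lie in $\partial S$ and that the $d_k$-regularity gives the clean bound $\delta_{G^k}(S)\le d_k|\partial S|$. Everything else is bookkeeping: invoking formula (C) for the degree and Theorem 5.2 for the edge expansion, then dividing.
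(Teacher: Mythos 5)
Your proposal is correct and follows essentially the same route as the paper: use the $k(n-1)$-regularity of $G^k$ together with the bound $\Phi(G^k)\ge n/2$ from the expansion theorem, and divide the edge expansion by the degree to get a vertex expansion bound exceeding $\frac{1}{2k}$. The only difference is that you spell out the inequality $\delta_{G^k}(S)\le d_k|\partial S|$, which the paper leaves implicit.
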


\begin{proof}
Since $G^k$  is regular graph of degree $(n-1)k$, and the edge expansion coefficient $\Phi(G^k) \geq \frac{n}{2}$ , it follows that vertex expansion coefficient is at least $\frac{\Phi(G^k)}{(n-1)k}$  and  $\frac{\Phi(G^k)}{(n-1)k} \geq \frac{n/2}{(n-1)k} > \frac{1}{2k}$.
\end{proof}

At the other extreme, suppose $G=K_n$, and $J=J^*$, where $J^*$ is $K_{nn}$ with matching missing, i.e.,
$E(J^*)= \{(i,j'): 1 \leq i \neq j \neq n\}$.

Again partitioning $V(G^k)$ the   as in Theorem 5.1 it is easy to see that $A(G^k)=((B_{i,j}))_{n \times n}$  where $B_{ii}$  is a zero matrix of order $n^{k-1}$  and $B_{ij}=A(G^{k-1})$  if $i \neq j$. Solving for the characteristic equation $det(A(G^k)- \lambda I_{n^k})=0$  inductively we can show that the spectrum of   is as follows:\\
$(n-1)^k,(-1)(n-1)^{k-1},(-1)^2(n-1)^{k-2},\cdots,(-1)^{k-1}(n-1),(-1)^k$.
\\ 
Thus $\Phi(G^k) \geq \frac{(n-1)^{k-2}(n^2-2n)}{2}$.

\section{Conclusions and Problems}
For any simple undirected graph $G$, we have constructed several infinite families of graphs (which we call self-similar graphs) all of which have chromatic numbers bounded by twice the number of vertices of $G$.  For a complete graph on $N$ vertices, using a special edge bundle $J$, we have constructed an infinite family of self-similar graphs all of which have the edge expansion coefficient $\Phi$  is bounded below by $\frac{N}{2}$.  This investigation of infinite families of self-similar graphs based on a pair $(G,J)$ leads to several interesting questions.  Given a graph $G$, we found two special edge bundles for which $\chi_\infty(G,J)=\chi(G)$.  In Theorem 3.6 we have characterized the edge bundles $J$ for which $\chi_\infty(K_n,J)<\infty$ .  We believe a similar characterization exists for an arbitrary graph, hence we ask following questions.\\
Problem 1: For any graph $G$, characterize the edge bundles $J$ for which $\chi_\infty(G,J)<\infty$.\\
The next problem attempts to generalize Theorem 5.1.\\
Problem 2: Let $G$ be any graph on $N$ vertices and let $J$ be a matching with edge set $E(J)=\{(i,i'):1 \leq i \leq N \}$. 
 Let $\{G^k \}$ be the sequence of self-similar graphs based on the pair $(G,J)$.  Is it true that  $\Phi(G^k) \geq \Phi(G)$ ?
\\
\\
\
\section*{Acknowledgement}
The authors would like to acknowledge the support given by the NSF CREST CRCN center (grant HRD-1137732), and Dr. Tymczak would like to acknowledge the Texas Southern University High Performance Computing Center (http:/hpcc.tsu.edu/; grant PHY-1126251).


\begin{thebibliography}{99}

\bibitem{Berge1} 
Claude Berge. {\em Graphs and Hypergraphs}, North-Holland Publishinhg Company, 1973.


\bibitem{Bo2}
J.A.~Bondy and U.S.R.~Murty. {\em Graph Theory with Applications}, Elsevier North-Hollad, 1976.

\bibitem{Norm3}
Norman Biggs.   {\em Interaction Models}, Cambridge University Press, 1977.

\bibitem{Chung4}
Fan R.K.~Chung.  {\em Spectral Graph Theory}, American Mathematical Society, Providence Rhode Island, 1997.

\bibitem{Lova5}
Laszlo Lovasz. {\em Combinatorial Problems and Exercises}, $2^{nd}$ edition,Elsevier,1993.

\bibitem{Norm6}
Norman Biggs. {\em Algebraic Graph Theory}, Cambridge University Press, 1996.

\bibitem{Kar7}
Karim M.~Abadir and Jan R.~Magnus {\em Matrix Algebra},Cambridge University Press, 2005.


\end{thebibliography}
\end{document}